\theoremstyle{plain}
\newtheorem{theorem}{Theorem}
\newtheorem{corollary}[theorem]{Corollary}
\newtheorem{proposition}[theorem]{Proposition}
\theoremstyle{definition}
\newtheorem{definition}[theorem]{Definition}
\newtheorem{remark}[theorem]{Remark}
\newtheorem{example}[theorem]{Example}
\renewcommand{\le}{\leqslant}
\renewcommand{\ge}{\geqslant}
\begin{document}
\title[Topological algebras of locally solid vector subspaces]
{Topological algebras of locally solid vector subspaces of order bounded operators}
\author{Omid Zabeti }
\address{Omid Zabeti,
Department of Mathematics, Faculty of Mathematics, University of Sistan and Baluchestan,
P.O.Box 98135-674, Zahedan, Iran.}
\email{\text{o.zabeti@gmail.com}}
\subjclass[2010]{47B65, 46A40, 46H35.}
\keywords{Order bounded operator; locally solid vector lattice; topological algebra.}
\begin{abstract}
Let $E$ be a locally solid vector lattice. In this paper, we consider two particular vector subspaces of the space of all order bounded operators on $E$. With the aid of two appropriate topologies, we show that under some conditions, they establish both, locally solid vector lattices and topologically complete topological algebras.
\end{abstract}
\maketitle
\section{introductory facts}
First of all, let us recall some notation and terminology used in this paper.
A subset $S$ of a vector lattice $E$ is said to be \textit{solid} if $y\in S$, $x\in E$, and $|x|\leq|y|$, then, we have $x\in S$. A \textit{topological vector lattice} is an ordered topological vector space which is also a vector lattice; for more details on these notions see \cite{NA,ROB}. By a \textit{locally solid vector lattice} we mean a topological vector lattice with a locally solid topology. Also, note that a vector lattice $E$ is \textit{Dedekind complete} if every subset of $E$ which is bounded above has a supremum. By $[-a,a]$ in vector lattice $E$, we mean the order interval consists of all $z\in E$ that $-a\le z\le a$. Finally, consider this point that in a topological vector lattice $E$, we have two notions for boundedness; a subset $B\subseteq E$ is called order bounded if it is contained in an order interval and it is bounded if for each zero neighborhood $U\subseteq E$ there exists a positive scalar $\gamma$ with $B\subseteq \gamma U$. Some relations and results for the mentioned types of bounded sets and operators defined on $E$ have been investigated; see \cite{HO, HO1, HO2, KH} for more details. In addition, for terminology and standard facts concerning topological vector lattices and related concepts, we refer the reader to \cite{AB, NB, ZA}. 

Now, let us state some motivation.
Let $E$ be a vector lattice and $B_b(E)$ be the space of all order bounded operators on $E$. Since we have just order structure on $B_b(E)$, it is natural that we could not expect a topological structure on it. Nevertheless, when $E$ is a Banach lattice, $B_b(E)$ forms a Banach lattice, too (see \cite{AW}). But when we discuss topological vector lattices or even locally solid vector lattices, no "suitable" topology is known for $B_b(E)$. Here, by "suitable" topology, we mean a topology which respects continuity in algebraic or lattice structures or converts $B_b(E)$ to a known topological algebraic structure. On the other hand,
the relation between algebraic and topological structures has been considerable for several decades in the sense that it has many applications in other disciplines (see \cite{AA, AB, AB1, HO, HO1}). In fact, combining these notions is fundamental for modern analysis, for example Banach spaces, Banach lattices and operators between them.

 Although, there is no appropriate topology for order bounded operators on topological vector lattices in the sense that it makes the space to a known topological algebraic structure; instead, we can investigate two subsets of the class of all order bounded operators which possess suitable topologies. In fact, in this paper, we consider two subspaces of the space of all order bounded operators on a topological vector lattice; using both topological and order structures of the underlying set. With the two effective topologies, we show that these classes of order bounded operators on a Dedekind complete topologically complete locally solid vector lattice, form both, locally solid vector lattices and also topologically complete topological algebras.
 
All vector spaces in this paper are assumed to be real and all topological vector lattices are considered to be locally solid.
In this step, let us start with the definition.
\begin{definition}
Let $E$ and $F$ be locally solid vector lattices. A linear operator $T:E \to F$ is said to be:
\begin{itemize}
\item[\em i.]{$no$-bounded if there exists some zero neighborhood $U\subseteq E$ such that  $T(U)$ is order bounded in $F$}.
\item[\em ii.]{$bo$-bounded if for every bounded set $ B\subseteq E$, $T(B)$ is order bounded in $F$}.
\end{itemize}
\end{definition}
The class of all $no$-bounded operators on a locally solid vector lattice $E$ is denoted by $B_{no}(E)$ and is equipped with the topology of order uniform convergence on some zero neighborhood. Namely, a net $(S_{\alpha})$ of $no$-bounded operators converges to zero in this topology if there exists a zero neighborhood $U\subseteq E$ such that for any $a\in E_{+}$ there is an $\alpha_0$ with $S_{\alpha}(U)\subseteq [-a,a]$ for each $\alpha\ge\alpha_0$.
The class of all $bo$-bounded operators on a locally solid vector lattice $E$ is denoted by $B_{bo}(E)$ and is allocated to the topology of order uniform convergence on bounded sets; a net $(S_{\alpha})$ of $bo$-bounded operators order converges to zero uniformly on a bounded set $B\subseteq E$ if for each $a\in E_{+}$ there is an $\alpha_0$ with $S_{\alpha}(B)\subseteq [-a,a]$ for each $\alpha\ge\alpha_0$. One can easily verify that $B_{no}(E)$ and $B_{bo}(E)$ are subalgebras of the algebra of all order bounded operators on a topological vector lattice $E$. Note that these algebras are not unital in general; when $E$ contains an order bounded zero neighborhood, then these algebras are unital. Nevertheless, this condition is sufficient; consider Theorem 2.2 from \cite{HO} and results after that.
In addition, by Theorem 2.19 in \cite{AB}, every order bounded subset is bounded so that every $no$-bounded operator is $bo$-bounded and every $bo$-bounded operator is an order bounded operator. In prior to anything, we show that these classes of linear operators are not equal, in general.
\begin{example}\label{1}
Let $E$ be $c_0$, the space of all null sequences, with the usual order and norm topology. Consider the identity operator $I$ on $E$. Indeed, $I$ is order bounded. But it fails to be $bo$-bounded. Suppose $N_{1}^{(0)}$ is the closed unit ball centered at zero with radius one. It is not difficult to see that the sequence $(u_n)$ defined via $u_n=\Sigma_{i=1}^{n}e_i$ is not order bounded in $E$, in which $e_i$ is the standard basis of $E$.
\end{example}
Also, Example 2.5 from \cite {HO}, presents a $bo$-bounded operator which is not $no$-bounded.
\begin{remark}
Note that one can consider $bo$-bounded operators on a locally solid vector lattice $E$ as a class of bounded operators from one bornological space ($E$ with the bornology of topologically bounded sets) to another bornological space ($E$ with the bornology of order bounded sets). For a review on terminology of bornological spaces and related notions, see \cite{HOG}. When one discusses with topological vector spaces, the bounded sets are usually considered as topologically bounded sets; which is called the Von Neumann bornology (see \cite{HOG}, 1:4.3). Therefore, taking the difference between topologically bounded sets and order bounded ones into account, we can not expect a suitable topology for the class of all bounded operators on bornological spaces in a manner that it preserves continuity in the theme of $bo$-bounded operators between locally solid vector lattices. On the other hand, there is a notion for convergence in any bornological vector space in term of bounded sets; namely, a sequence $(x_n)$ in a bornological vector space $E$ is convergent to zero if there exist a bounded set $B$ and a sequence $(\lambda_n)$ of reals tending to zero such that $x_n\in \lambda_n B$ for each $n\in \Bbb N$ (see \cite{HOG}, 1:4.1). It is known that this convergence in  topological vector spaces is almost topological convergence; this means that in the most cases in analysis, these two types of convergence coincide (\cite{HOG}, 1:4.3, Proposition 2). Also, it is worth mentioning that the order convergence topology on $B_{bo}(E)$ is finer ( and so suitable for our purpose) than the usual topology of uniform convergence on bounded sets ( see \cite{Tr} for more information).
\end{remark}
\section{main result}
\begin{theorem}
The operations of addition, scalar multiplication, and product are continuous in $B_{no}(E)$ with respect to the order uniform convergence topology on some zero neighborhood.
\end{theorem}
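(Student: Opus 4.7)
My plan is to prove continuity at the origin for each of the three operations; together with translation invariance and bilinearity this gives continuity at arbitrary points. Throughout, I use the working description of convergence on $B_{no}(E)$: a net $(R_\alpha)$ tends to $0$ precisely when there is a zero neighborhood $U$ of $E$ such that, for every $a\in E_+$, eventually $R_\alpha(U)\subseteq [-a,a]$.

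For addition, given $S_\alpha\to S$ and $T_\alpha\to T$, pick zero neighborhoods $U_1$ and $U_2$ witnessing $S_\alpha-S\to 0$ and $T_\alpha-T\to 0$, and set $U:=U_1\cap U_2$. For an arbitrary $a\in E_+$, invoke the defining condition with $a/2$ for both nets, take the maximum of the two resulting indices, and apply the triangle inequality to conclude $(S_\alpha+T_\alpha-S-T)(U)\subseteq[-a,a]$ eventually. For scalar multiplication, I split $\lambda_\alpha S_\alpha-\lambda S=\lambda_\alpha(S_\alpha-S)+(\lambda_\alpha-\lambda)S$. The first summand is controlled using eventual boundedness $|\lambda_\alpha|\le M$ together with the convergence $S_\alpha-S\to 0$ applied with $a/M$ in place of $a$. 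For the second, $no$-boundedness of $S$ provides a zero neighborhood $V$ with $S(V)\subseteq[-b,b]$, and since $|\lambda_\alpha-\lambda|\to 0$ the scaled intervals $|\lambda_\alpha-\lambda|\cdot[-b,b]$ shrink into any prescribed $[-a,a]$ eventually.

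For the product, split $S_\alpha T_\alpha-ST=S_\alpha(T_\alpha-T)+(S_\alpha-S)T$. The second summand is the easier one: $no$-boundedness of $T$ yields $T(V_T)\subseteq[-b_T,b_T]$ for some zero neighborhood $V_T$, and since every order bounded set in a locally solid vector lattice is topologically bounded by Theorem~2.19 of \cite{AB}, the interval $[-b_T,b_T]$ is absorbed by the zero neighborhood $U_S$ witnessing $S_\alpha-S\to 0$, say $[-b_T,b_T]\subseteq \gamma U_S$ for some $\gamma>0$. Applying the convergence of $S_\alpha-S$ with $a/\gamma$ in place of $a$ then delivers $(S_\alpha-S)T(V_T)\subseteq[-a,a]$ eventually, as needed.

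The first summand $S_\alpha(T_\alpha-T)$ is the main obstacle. The plan is to exploit that $S_\alpha\to S$ forces $(S_\alpha)$ to be eventually uniformly $no$-bounded on a common zero neighborhood $W$, and then to absorb the shrinking images $(T_\alpha-T)(U_T)$ into suitably scaled copies of $W$ so that uniform control of $(S_\alpha)$ on $W$, together with linearity, pushes $S_\alpha(T_\alpha-T)$ into arbitrarily small order intervals. Calibrating the rescaling correctly, using once more that order bounded sets are absorbed by topological zero neighborhoods, is the most delicate technical point and is where the interplay between the order structure and the locally solid topology is essential.
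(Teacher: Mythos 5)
Your proposal is more ambitious than the paper's proof: the paper only verifies continuity at the origin (it takes nets $T_\alpha \to 0$, $S_\alpha \to 0$ and scalars $\gamma_n \to 0$ and shows the sum, the scalar multiple $\gamma_n T_\alpha$ and the product $T_\alpha S_\alpha$ again tend to $0$), whereas you attempt continuity at arbitrary points via the decompositions $\lambda_\alpha S_\alpha - \lambda S = \lambda_\alpha(S_\alpha - S) + (\lambda_\alpha - \lambda)S$ and $S_\alpha T_\alpha - ST = S_\alpha(T_\alpha - T) + (S_\alpha - S)T$. The parts that overlap with the paper are correct: your addition step is the paper's argument verbatim, and your treatment of $(S_\alpha - S)T$ uses exactly the paper's absorption trick (order bounded sets are topologically bounded, hence $[-b_T,b_T]\subseteq \gamma U_S$).

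The two cross terms the paper never has to face are where your argument breaks. For $(\lambda_\alpha-\lambda)S$ you assert that the intervals $|\lambda_\alpha-\lambda|\cdot[-b,b]$ ``shrink into any prescribed $[-a,a]$ eventually.'' This is false in a general vector lattice: $\varepsilon[-b,b]\subseteq[-a,a]$ forces $\varepsilon b\le a$, which for fixed $b$ and \emph{arbitrary} $a\in E_+$ requires $b$ to lie in the principal ideal generated by $a$. In $E=c_0$ take $b=e_2$ and $a=e_1$; no positive multiple of $b$ lies below $a$, so $\varepsilon[-b,b]\not\subseteq[-a,a]$ for any $\varepsilon>0$. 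This is not a repairable slip: for the same reason the map $\lambda\mapsto\lambda S$ is in general not continuous at a fixed $S\neq 0$ in this topology. The same obstruction defeats your plan for $S_\alpha(T_\alpha-T)$, which you in any case leave unexecuted: after absorbing $(T_\alpha-T)(U)$ into $\varepsilon W$, the best available bound is $S_\alpha(\varepsilon W)\subseteq\varepsilon\bigl([-a',a']+[-b,b]\bigr)$ with $b$ coming from the fixed limit $S$, and the term $\varepsilon b$ cannot be forced below an arbitrary $a\in E_+$. So as a proof of continuity at arbitrary points the proposal has a genuine gap; to match what the paper actually establishes you should restrict, as the paper does, to continuity at zero, where no cross terms arise and the arguments you already have suffice.
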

\begin{proof}
Suppose $(T_{\alpha})$ and $(S_{\alpha})$ are two nets of $no$-bounded operators which are order uniform convergent to zero on some zero neighborhood $U\subseteq E$.
For any $a\in E_{+}$ there are some $\alpha_0$ and $\alpha_1$ with $T_{\alpha}(U)\subseteq [\frac{-a}{2},\frac{a}{2}]$ for each $\alpha\ge\alpha_0$ and $S_{\alpha}(U)\subseteq [\frac{-a}{2},\frac{a}{2}]$ for each $\alpha\ge\alpha_1$. There exists an $\alpha_2$ such that $\alpha_2\ge\alpha_0$ and $\alpha_2\ge\alpha_1$, so that for each $\alpha\ge\alpha_2$,
\[(T_{\alpha}+S_{\alpha})(U)\subseteq T_{\alpha}(U)+S_{\alpha}(U)\subseteq [\frac{-a}{2},\frac{a}{2}]+[\frac{-a}{2},\frac{a}{2}]=[-a,a].\]
Now, we show the continuity of the scalar multiplication. Let $(\gamma_n)$ be a sequence of reals which is convergent to zero. For sufficiently large $n$, we have, $|\gamma_n|\le 1$, so that $\gamma_nU\subseteq U$. Thus, using the net $(T_{\alpha})$ as in the first part, we have $\gamma_n T_{\alpha}(U)\subseteq T_{\alpha}(U)\subseteq [-a,a]$.
Now, we show that product is also continuous. Since every order bounded set is bounded, there is a $\gamma>0$ with $[\frac{-a}{\gamma},\frac{a}{\gamma}]\subseteq U$. There exists an $\alpha_3$ with $S_{\alpha}(U)\subseteq [\frac{-a}{\gamma},\frac{a}{\gamma}]$ for each $\alpha\geq \alpha_3$. Choose $\alpha_4$ such that $T_{\alpha}(U)\subseteq [-a,a]$ for each $\alpha\geq\alpha_4$. Pick index $\alpha_5$ with $\alpha_5\geq\alpha_3$ and $\alpha_5\geq\alpha_4$. Therefore, for each $\alpha\geq\alpha_5$,  we have
\[T_{\alpha}(S_{\alpha}(U))\subseteq T_{\alpha}([\frac{-a}{\gamma},\frac{a}{\gamma}])\subseteq T_{\alpha}(U)\subseteq [-a,a].\]
\end{proof}
\begin{theorem}
The operations of addition, scalar multiplication, and product are continuous in $B_{bo}(E)$ with respect to the order uniform convergence topology on bounded sets.
\end{theorem}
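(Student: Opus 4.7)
The plan is to mimic the structure of the preceding theorem for $B_{no}(E)$, replacing the fixed zero neighborhood $U$ by an arbitrary bounded set $B\subseteq E$. Let $(T_\alpha)$ and $(S_\alpha)$ be nets in $B_{bo}(E)$ converging to zero in the topology of order uniform convergence on bounded sets. Fix an arbitrary bounded set $B\subseteq E$ and an arbitrary $a\in E_+$; everything reduces to producing an index beyond which the composite or sum sits inside $[-a,a]$ when applied to $B$.

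For addition, I would pick indices $\alpha_0,\alpha_1$ with $T_\alpha(B)\subseteq[\tfrac{-a}{2},\tfrac{a}{2}]$ for $\alpha\ge\alpha_0$ and $S_\alpha(B)\subseteq[\tfrac{-a}{2},\tfrac{a}{2}]$ for $\alpha\ge\alpha_1$, take a common upper bound, and add: $(T_\alpha+S_\alpha)(B)\subseteq[-a,a]$. For scalar multiplication, if $(\gamma_n)$ is a real sequence with $\gamma_n\to 0$, then eventually $|\gamma_n|\le 1$; solidity of $[-a,a]$ then yields $\gamma_nT_\alpha(B)\subseteq[-a,a]$ once $T_\alpha(B)\subseteq[-a,a]$. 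These two steps are essentially identical to the $B_{no}(E)$ case.

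The main obstacle is continuity of the product, because in $B_{no}(E)$ one exploited the fact that for any $a$ there is $\gamma>0$ with $[-a/\gamma,a/\gamma]\subseteq U$, giving $S_\alpha(U)\subseteq U$ and allowing a single application of the convergence of $T_\alpha$ on $U$. For a general bounded set $B$ one cannot arrange $S_\alpha(B)\subseteq B$, so a bootstrap is needed. The idea: first choose $\alpha_1$ so that $S_\alpha(B)\subseteq[-a_0,a_0]$ for $\alpha\ge\alpha_1$, where $a_0\in E_+$ is picked in advance (any fixed positive element works; one may even take $a_0=a$). Since every order bounded set in a locally solid vector lattice is topologically bounded (Theorem 2.19 of \cite{AB}), the order interval $[-a_0,a_0]$ is itself a bounded subset of $E$. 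Now apply the convergence of $T_\alpha$ to this particular bounded set: there exists $\alpha_2$ with $T_\alpha([-a_0,a_0])\subseteq[-a,a]$ for $\alpha\ge\alpha_2$. Taking any $\alpha_3\ge\alpha_1,\alpha_2$ then gives
\[
T_\alpha S_\alpha(B)\subseteq T_\alpha\bigl([-a_0,a_0]\bigr)\subseteq[-a,a]
\qquad(\alpha\ge\alpha_3),
\]
which finishes the argument.

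I expect no further difficulty beyond this two-stage choice of indices; once one recognizes that order intervals are legitimate bounded sets on which the defining convergence of $T_\alpha$ may be invoked, the proof is parallel to the preceding one. The only care needed is to select $a_0$ independently of $\alpha$ so that the intermediate bounded set on which $T_\alpha$ is eventually small does not depend on $\alpha$.
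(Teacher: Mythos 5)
Your proposal is correct and follows essentially the same route as the paper: identical treatment of addition, and for the product the same two-stage bootstrap of first trapping $S_\alpha(B)$ in an order interval and then using that this interval is itself topologically bounded so the convergence of $(T_\alpha)$ applies to it. The only (immaterial) difference is in scalar multiplication, where you use $|\gamma_n|\le 1$ together with solidity of $[-a,a]$, while the paper instead applies the convergence of $(T_\alpha)$ to the bounded set $\bigcup_n \gamma_n B$; both work.
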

\begin{proof}
Suppose $(T_{\alpha})$ and $(S_{\alpha})$ are two nets of $bo$-bounded operators which are order uniform convergent to zero on bounded sets.
Fix a bounded set $B\subseteq E$. For any positive $a\in E$ there are some $\alpha_0$ and $\alpha_1$ with $T_{\alpha}(B)\subseteq [\frac{-a}{2},\frac{a}{2}]$ for each $\alpha\ge\alpha_0$ and $S_{\alpha}(B)\subseteq [\frac{-a}{2},\frac{a}{2}]$ for each $\alpha\ge\alpha_1$. There exists an $\alpha_2$ such that $\alpha_2\ge\alpha_0$ and $\alpha_2\ge\alpha_1$, so that for each $\alpha\ge\alpha_2$,
\[(T_{\alpha}+S_{\alpha})(B)\subseteq T_{\alpha}(B)+S_{\alpha}(B)\subseteq [\frac{-a}{2},\frac{a}{2}]+[\frac{-a}{2},\frac{a}{2}]=[-a,a].\]
Now, we show the continuity of the scalar multiplication. Let $(\gamma_n)$ be a sequence of reals which is convergent to zero. Consider $F=\cup_{n\in \Bbb N} \gamma_n B$ which is indeed bounded in $E$. There exists an $\alpha_3$ such that $T_{\alpha}(F)\subseteq [-a,a]$ for each $\alpha\geq\alpha_3$. Thus, we have  $\gamma_nT_{\alpha}(B)
\subseteq T_{\alpha}(F)\subseteq [-a,a]$.
Now, we show that product is also continuous. There exists some $\alpha_4$ with $S_{\alpha}(B)\subseteq [-a,a]$ for each $\alpha\geq\alpha_4$. Since every order bounded set in a locally solid vector lattice is bounded, there is an $\alpha_5$ such that $T_{\alpha}([-a,a])\subseteq [-a,a]$ for each $\alpha\geq\alpha_5$. Choose index $\alpha_6$ that $\alpha_6\geq\alpha_4$ and $\alpha_6\geq\alpha_5$. For each $\alpha\geq\alpha_6$, we conclude
\[T_{\alpha}(S_{\alpha}(B))\subseteq T_{\alpha}([-a,a])\subseteq [-a,a].\]
\end{proof}
In this step, we investigate the continuity of the lattice operations on these classes of order bounded operators with respect to the assumed topologies.
\begin{theorem}
Suppose $E$ is Dedekind complete. Then, the lattice operations in $B_{no}(E)$ are continuous with respect to the order uniform convergence topology on some zero neighborhood.
\end{theorem}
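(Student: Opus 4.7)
The plan is to reduce continuity of all lattice operations to continuity at zero of the modulus map $T \mapsto |T|$. Combined with the already established continuity of addition and scalar multiplication (Theorem immediately preceding), the identities $T^{+} = \tfrac{1}{2}(T + |T|)$, $T^{-} = \tfrac{1}{2}(|T| - T)$, and $T \vee S = \tfrac{1}{2}(T + S + |T - S|)$ will reduce everything to showing: whenever a net $(T_{\alpha})$ in $B_{no}(E)$ converges to zero in the topology of order uniform convergence on some zero neighborhood, so does $(|T_{\alpha}|)$.

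The key tool is Dedekind completeness of $E$, which guarantees that $B_b(E)$ is a Dedekind complete vector lattice and, in particular, that the Riesz--Kantorovich formula
\[
|T|(u) \;=\; \sup\{\,|Ty| \::\: |y| \le u\,\}
\]
holds for every $T \in B_b(E)$ and every $u \in E_{+}$. I will also use that in a locally solid vector lattice the solid zero neighborhoods form a base at zero, so when given the witnessing neighborhood $U$ from the hypothesis I may shrink it to a solid zero neighborhood $V \subseteq U$ without losing the convergence.

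With these tools the argument is short. Suppose $(T_{\alpha})$ converges to zero uniformly on a zero neighborhood $U$, and pick a solid $V \subseteq U$. Given $a \in E_{+}$, choose $\alpha_0$ with $T_{\alpha}(V) \subseteq T_{\alpha}(U) \subseteq [-a,a]$ for all $\alpha \ge \alpha_0$. For $u \in V_{+}$ and any $y$ with $|y| \le u$, solidity forces $y \in V$, hence $|T_{\alpha} y| \le a$; the Riesz--Kantorovich formula then yields $|T_{\alpha}|(u) \le a$. For an arbitrary $u \in V$ the decomposition $u = u_{+} - u_{-}$ with $u_{\pm} \in V$ (again by solidity) gives $|T_{\alpha}|(u) \in [-a,a]$, so $|T_{\alpha}|(V) \subseteq [-a,a]$ for $\alpha \ge \alpha_0$. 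The same computation applied to a single $T$ shows $|T| \in B_{no}(E)$, so the modulus actually lands in the correct space, completing the proof.

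The only subtle point, and the one I would flag as the main obstacle, is the passage from the given $U$ to a solid sub-neighborhood $V$: without solidity, the sets $\{y \::\: |y| \le u\}$ appearing in the Riesz--Kantorovich supremum need not sit inside $U$, and the bound $|T_{\alpha} y| \le a$ would fail. Once solidity is secured the estimate is uniform in $a$ and in $u \in V$ simultaneously, which is exactly what the order uniform convergence topology demands.
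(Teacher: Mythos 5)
Your core estimate is sound, and your route is genuinely different from the paper's. The paper works directly with the Riesz--Kantorovich formula for $(T\vee S)(x)=\sup\{T(u)+S(v): u,v\ge 0,\ u+v=x\}$ and bounds $(T_{\alpha}\vee S_{\alpha})(x)-(T\vee S)(x)$ via the inequality $\sup(A)-\sup(B)\le\sup(A-B)$, using solidity of $U$ to keep every decomposition $x=u+v$ inside $U$. You instead isolate the modulus map and show, via $|T|(u)=\sup\{|Ty|:|y|\le u\}$ and solidity of $V$, that $T(V)\subseteq[-a,a]$ forces $|T|(V)\subseteq[-a,a]$; a pleasant by-product is an explicit proof that $B_{no}(E)$ is closed under the lattice operations of $B_b(E)$, which the paper leaves implicit.

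There is, however, a gap in your reduction step. Continuity of $T\mapsto|T|$ \emph{at zero}, together with continuity of addition, does not by itself yield continuity of $T\mapsto|T|$ at an arbitrary $T$ (and hence of $T^{\pm}$ and of $\vee$ via your identities), because the modulus is not additive: from $T_{\alpha}\to T$ you obtain $|T_{\alpha}-T|\to 0$, but you cannot express $|T_{\alpha}|-|T|$ in terms of $|T_{\alpha}-T|$ by linearity alone. The missing ingredient is the Riesz-space inequality $\bigl||T_{\alpha}|-|T|\bigr|\le|T_{\alpha}-T|$ in $B_b(E)$, combined with the observation that the basic neighborhoods $\{R: R(V)\subseteq[-a,a]\}$ (for solid $V$) are themselves solid --- and this follows from exactly the computation you already made: if $|R|\le|R'|$ and $R'(V)\subseteq[-a,a]$, then for $u\in V$ one has $|R(u)|\le|R|(|u|)\le|R'|(|u|)\le a$, since $|u|\in V$ and $|R'|(|u|)=\sup\{|R'y|:|y|\le|u|\}\le a$. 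With that line added, $|T_{\alpha}-T|\to 0$ does imply $|T_{\alpha}|\to|T|$, and your identities finish the proof. So the argument is salvageable with tools you already deployed, but as written the reduction is asserted rather than proved, and the assertion as literally stated (everything reduces to continuity of the modulus at zero) is not sufficient on its own.
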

\begin{proof}
 By the Riesz-Kantorovich formula, for $no$-bounded operators $T,S\in B_{no}(E)$ and every $x\in E_{+}$, we have
 \[(T\vee S)(x)=\sup\{T(u)+S(v): u,v\geq 0, u+v=x\}.\]
  Now, suppose $(T_{\alpha})$ and $(S_{\alpha})$ are two nets of $no$-bounded operators that order converge uniformly on some zero neighborhood $U\subseteq E$ to the linear operators $T$ and $S$, respectively. Fix $x\in U_{+}$, and suppose $u,v$ are positive elements such that $x=u+v$. Since $U$ is solid, we have $u,v\in U$. Also, recall that for two subsets $A,B$ in a vector lattice, we have $\sup(A)-\sup(B)\le\sup(A-B)$. Thus,
 \[\sup\{T_{\alpha}(u)+S_{\alpha}(v): u,v\geq 0, u+v=x\}-\sup\{T(u)+S(v): u,v\geq 0, u+v=x\}\]
 \[\le\sup\{(T_{\alpha}-T)(u)+(S_{\alpha}-S)(v):u,v\geq 0, u+v=x\}.\]
 Let $a\in E_{+}$ be arbitrary. There exists an $\alpha_0$ such that $(T_{\alpha}-T)(U)\subseteq [-\frac{a}{2},\frac{a}{2}]$ for each $\alpha\geq\alpha_0$. Pick index $\alpha_1$ with $(S_{\alpha}-S)(U)\subseteq[-\frac{a}{2},\frac{a}{2}]$ for each $\alpha\geq\alpha_1$. Choose $\alpha_2$ such that $\alpha_2\geq\alpha_0$ and $\alpha_2\geq\alpha_1$. So, for each $\alpha\geq\alpha_2$,
\[(T_{\alpha}\vee S_{\alpha})(x)-(T\vee S)(x)\leq(T_{\alpha}-T)(x)+(S_{\alpha}-S)(x)\in [-a,a].\]

\end{proof}
\begin{theorem}
Suppose $E$ is Dedekind complete. Then, the lattice operations in $B_{bo}(E)$ are continuous with respect to the  order uniform convergence topology on bounded sets.
\end{theorem}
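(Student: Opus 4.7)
The plan is to parallel the proof of the preceding theorem on lattice operations in $B_{no}(E)$, adapting it from a solid zero neighborhood $U$ to a generic bounded set $B\subseteq E$. The essential new difficulty is that a topologically bounded set need not be solid, whereas in the earlier argument solidness of $U$ was used to place both summands $u,v$ of a Riesz--Kantorovich decomposition $u+v=x$ inside $U$. I will side-step this by passing to the solid hull $\mathrm{sol}(B)$, which remains bounded in a locally solid vector lattice: any solid zero neighborhood that absorbs $B$ also absorbs $\mathrm{sol}(B)$.

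Fix a bounded $B\subseteq E$, nets $(T_\alpha),(S_\alpha)$ in $B_{bo}(E)$ that order converge uniformly on bounded sets to $T,S$, and $a\in E_+$. First, I would invoke the Riesz--Kantorovich formula $(T\vee S)(x)=\sup\{T(u)+S(v):u,v\ge 0,\, u+v=x\}$ (valid since $E$ is Dedekind complete) together with the inequality $\sup(A)-\sup(A')\le\sup(A-A')$, exactly as in the preceding proof, to obtain for every $x\ge 0$:
\[ (T_\alpha\vee S_\alpha)(x)-(T\vee S)(x)\le\sup\{(T_\alpha-T)(u)+(S_\alpha-S)(v):u,v\ge 0,\,u+v=x\}. \]
I would then choose $\alpha_0$ such that $(T_\alpha-T)(\mathrm{sol}(B))\subseteq[-a/4,a/4]$ and $(S_\alpha-S)(\mathrm{sol}(B))\subseteq[-a/4,a/4]$ for every $\alpha\ge\alpha_0$. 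For $x\in \mathrm{sol}(B)_+$, solidness forces $u,v\in \mathrm{sol}(B)$ in any decomposition above, so the supremum is at most $a/2$. The symmetric inequality (swapping the roles of $(T_\alpha,S_\alpha)$ and $(T,S)$) then yields $(T_\alpha\vee S_\alpha-T\vee S)(x)\in[-a/2,a/2]$ for every $x\in \mathrm{sol}(B)_+$.

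To extend the bound to an arbitrary $x\in B$, I would write $x=x^+-x^-$. Since $x^+,x^-\le|x|\in \mathrm{sol}(B)$ and the solid hull is solid, both $x^+$ and $x^-$ lie in $\mathrm{sol}(B)_+$. Linearity of $T_\alpha\vee S_\alpha-T\vee S$ then gives $(T_\alpha\vee S_\alpha-T\vee S)(B)\subseteq[-a,a]$ for $\alpha\ge\alpha_0$, which is the required uniform order convergence of the join on $B$. The remaining lattice operations ($\wedge$, $|\cdot|$, and the positive and negative parts) reduce to $\vee$ via the standard vector lattice identities.

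The principal obstacle is thus precisely the non-solidness of bounded sets; once this is resolved by the solid-hull device, together with the routine observation that the solid hull of a bounded set is bounded in a locally solid vector lattice, the remainder of the proof is a direct transcription of the preceding theorem.
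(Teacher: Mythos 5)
Your proposal is correct and follows essentially the same route as the paper: both arguments enlarge $B$ to a bounded set containing every summand of the Riesz--Kantorovich decompositions (the paper uses the set $B_1$ of such summands directly, you use the solid hull $\mathrm{sol}(B)$, which contains it) and then apply uniform convergence on that larger set. Your version is in fact slightly more careful, since you also supply the symmetric lower bound and handle non-positive elements of $B$ via $x=x^+-x^-$, steps the paper leaves implicit.
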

\begin{proof}
 By the Riesz-Kantorovich formula, for $bo$-bounded operators $T,S\in B_{bo}(E)$ and every $x\in E_{+}$, we have
 \[(T\vee S)(x)=\sup\{T(u)+S(v): u,v\geq 0, u+v=x\}.\]
  Now, suppose $(T_{\alpha})$ and $(S_{\alpha})$ are two nets of $bo$-bounded operators that order converge uniformly on bounded sets to the linear operators $T$ and $S$, respectively. Fix a bounded set $B\subseteq E$. Put,
   \[B_1=\{u\in E^{+}, \exists v\in E^{+}, u+v=x, \text{ for some } x\in B^{+}\}.\]
   One can verify that $B_1$ is also bounded, so that $T_{\alpha}\to T$ and $S_{\alpha}\to S$ order converge uniformly on $B_1$.
   Fix a $x\in B_{+}$ and some positive element $u$ and $v$ that $x=u+v$. Thus,
 \[\sup\{T_{\alpha}(u)+S_{\alpha}(v): u,v\geq 0, u+v=x\}-\sup\{T(u)+S(v): u,v\geq 0, u+v=x\}\]
 \[\le\sup\{(T_{\alpha}-T)(u)+(S_{\alpha}-S)(v):u,v\geq 0, u+v=x\}.\]
 Choose $a\in E_{+}$ arbitrary. There exists an $\alpha_0$ such that $(T_{\alpha}-T)(B_1)\subseteq [-\frac{a}{2},\frac{a}{2}]$ for each $\alpha\geq\alpha_0$. Pick index $\alpha_1$ with $(S_{\alpha}-S)(B_1)\subseteq[-\frac{a}{2},\frac{a}{2}]$ for each $\alpha\geq\alpha_1$. Choose $\alpha_2$ such that $\alpha_2\geq\alpha_0$ and $\alpha_2\geq\alpha_1$. Thus, for each $\alpha\geq\alpha_2$,
\[(T_{\alpha}\vee S_{\alpha})(x)-(T\vee S)(x)\leq(T_{\alpha}-T)(x)+(S_{\alpha}-S)(x)\in [-a,a].\]
\end{proof}
Now, we show that $B_{no}(E)$ and $B_{bo}(E)$ are topologically complete algebras with respect to the assigned topologies.
\begin{proposition}\label{4}
Suppose $(T_{\alpha})$ is a net of $no$-bounded operators which is order convergent uniformly on some zero neighborhood to the linear operator $T$. Then, $T$ is also $no$-bounded.
\end{proposition}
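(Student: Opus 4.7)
The plan is to combine the single zero neighborhood $U$ witnessing the convergence with a zero neighborhood witnessing the $no$-boundedness of a suitably chosen tail operator $T_{\alpha_0}$, and then intersect them.

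By hypothesis there is a zero neighborhood $U\subseteq E$ such that for every $a\in E_+$ we can find $\alpha_0$ with $(T_\alpha - T)(U)\subseteq [-a,a]$ for all $\alpha\ge \alpha_0$. First I would fix any convenient $a\in E_+$ (say $a$ positive) and extract the corresponding index $\alpha_0$, so that in particular $(T_{\alpha_0}-T)(U)\subseteq [-a,a]$. Since $T_{\alpha_0}$ is $no$-bounded, there is a zero neighborhood $V\subseteq E$ and some $b\in E_+$ with $T_{\alpha_0}(V)\subseteq [-b,b]$.

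Now let $W=U\cap V$, which is again a zero neighborhood in $E$. For every $x\in W$ I would write $T(x)=T_{\alpha_0}(x)-(T_{\alpha_0}-T)(x)$. Since $x\in V$ the first term lies in $[-b,b]$, and since $x\in U$ the second term lies in $[-a,a]$; therefore
\[
T(W)\subseteq T_{\alpha_0}(W)-(T_{\alpha_0}-T)(W)\subseteq [-b,b]+[-a,a]=[-(a+b),\,a+b],
\]
so $T(W)$ is order bounded. This shows $T\in B_{no}(E)$. Linearity of $T$ is already part of the hypothesis, so nothing more is required on that front.

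I do not anticipate a serious obstacle; the only mildly subtle point is resisting the temptation to argue that $T_{\alpha_0}(U)$ itself is order bounded (it need not be, since the neighborhood witnessing $no$-boundedness of $T_{\alpha_0}$ can differ from $U$). Passing to the intersection $W=U\cap V$ sidesteps this, and the rest is just the elementary fact that the sum of two order intervals is an order interval.
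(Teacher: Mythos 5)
Your argument is correct and is essentially the paper's own proof: the paper likewise fixes $a$, passes to a zero neighborhood $U_1\subseteq U$ on which $T_{\alpha_0}$ has order bounded image (your $W=U\cap V$), and concludes from $T(U_1)\subseteq T_{\alpha_0}(U_1)+[-a,a]$. The subtlety you flag --- that the neighborhood witnessing $no$-boundedness of $T_{\alpha_0}$ need not be $U$ itself --- is exactly what the paper's shrinking to $U_1$ handles.
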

\begin{proof}
There is a zero neighborhood $U\subseteq E$ such that for any $a\in E_{+}$ there exists an $\alpha_0$ with $(T-T_{\alpha})(U)\subseteq [-a,a]$ for each $\alpha\ge\alpha_0$. There is a zero neighborhood $U_1\subseteq U$ such that $T_{\alpha_{0}}(U_1)$ is order bounded. Since $T(U_1)\subseteq T_{\alpha_{0}}(U_1)+[-a,a]$, we conclude that $T$ is also $no$-bounded.
\end{proof}
\begin{proposition}\label{5}
Suppose $(T_{\alpha})$ is a net of $bo$-bounded operators which is order convergent uniformly on bounded sets to the linear operator $T$. Then, $T$ is also $bo$-bounded.
\end{proposition}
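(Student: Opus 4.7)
The plan is to mirror the proof of Proposition \ref{4} almost verbatim, replacing the single distinguished zero neighborhood $U$ with an arbitrary bounded set $B$. The definition of $bo$-boundedness requires us to verify that $T(B)$ is order bounded for every bounded $B \subseteq E$, so the structure of the argument is: fix $B$, use the uniform order convergence on $B$ to control the tail $(T - T_{\alpha})(B)$, and use $bo$-boundedness of a single $T_{\alpha_{0}}$ to control $T_{\alpha_{0}}(B)$.

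In more detail, I would first fix an arbitrary bounded set $B \subseteq E$ and any $a \in E_{+}$. By the hypothesis that $T_{\alpha} \to T$ order-uniformly on bounded sets, there is an index $\alpha_{0}$ with $(T - T_{\alpha})(B) \subseteq [-a,a]$ for every $\alpha \ge \alpha_{0}$. Since $T_{\alpha_{0}}$ is $bo$-bounded, $T_{\alpha_{0}}(B)$ is order bounded, say $T_{\alpha_{0}}(B) \subseteq [-b,b]$ for some $b \in E_{+}$.

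From the inclusion $T(B) \subseteq T_{\alpha_{0}}(B) + (T - T_{\alpha_{0}})(B) \subseteq [-b,b] + [-a,a] = [-(a+b),\,a+b]$, we conclude that $T(B)$ is order bounded in $E$. As $B$ was an arbitrary bounded subset of $E$, this shows $T \in B_{bo}(E)$.

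I do not anticipate any serious obstacle here: unlike the $no$-bounded case, there is no need to shrink a zero neighborhood (the analogue of $U_{1} \subseteq U$), since $bo$-boundedness is required on \emph{every} bounded set and we may therefore test directly on the given $B$. The only thing to keep in mind is that the index $\alpha_{0}$ depends on the chosen $a$ and on $B$, but that is exactly what the definition of $bo$-boundedness demands and causes no difficulty.
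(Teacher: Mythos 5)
Your proof is correct and follows essentially the same route as the paper: fix a bounded set $B$, use the uniform order convergence to get $(T-T_{\alpha_0})(B)\subseteq[-a,a]$, and combine with the order boundedness of $T_{\alpha_0}(B)$ via the decomposition $T(B)\subseteq T_{\alpha_0}(B)+(T-T_{\alpha_0})(B)$. Your version is in fact slightly more explicit than the paper's, since you name the order interval $[-b,b]$ containing $T_{\alpha_0}(B)$ and write out the sum of intervals.
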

\begin{proof}
Fix a bounded set $B\subseteq E$. Take a positive $a\in E$. There exists an $\alpha_0$ with $(T-T_{\alpha})(B)\subseteq [-a,a]$ for each $\alpha\ge\alpha_0$. Note that  $T_{\alpha_{0}}(B)$ is order bounded. Therefore from the relation $T(B)\subseteq T_{\alpha_{0}}(B)+[-a,a]$, we conclude that $T$ is also $bo$-bounded.
\end{proof}
\begin{proposition}
Let $E$ be a topologically complete vector lattice. Then, $B_{no}(E)$ is complete with respect to the topology of order uniform convergence on some zero neighborhood.
\end{proposition}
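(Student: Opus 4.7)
The plan is to produce a candidate limit operator pointwise, show it is linear, verify convergence in the given topology, and invoke Proposition~\ref{4} for $no$-boundedness. A Cauchy net $(T_\alpha)$ in $B_{no}(E)$ comes with a zero neighborhood $U\subseteq E$ such that for each $a\in E_+$ one has $(T_\alpha-T_\beta)(U)\subseteq[-a,a]$ for all sufficiently large $\alpha,\beta$. The delicate point throughout is that this Cauchy condition is order-theoretic, while topological completeness of $E$ is a topological condition, so a bridge between the two is needed.

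First I would fix $x\in U$ and promote the order-Cauchy net $(T_\alpha x)$ to a topologically Cauchy one. Given any solid zero neighborhood $V\subseteq E$, pick any nonzero $y\in V$ and set $a:=|y|$; then $a\in V\cap E_+$, and since $V$ is solid, $[-a,a]\subseteq V$. Hence $T_\alpha x - T_\beta x \in [-a,a]\subseteq V$ eventually, so $(T_\alpha x)$ is topologically Cauchy, and topological completeness of $E$ provides $Tx:=\lim_\alpha T_\alpha x$. Since $U$ is absorbing, every $y\in E$ is a scalar multiple of some point of $U$, so $(T_\alpha y)$ is also topologically Cauchy, and $T$ extends to a linear operator on all of $E$ using linearity of the $T_\alpha$ and uniqueness of topological limits.

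Next I would check that $T_\alpha\to T$ in the given topology, using the very same $U$. Fix $a\in E_+$ and choose $\alpha_0$ with $(T_\alpha - T_\beta)(U)\subseteq [-a,a]$ for $\alpha,\beta\geq\alpha_0$. For $\alpha\geq\alpha_0$ and $x\in U$, the element $T_\alpha x - T_\beta x$ lies in $[-a,a]$ for every $\beta\geq\alpha_0$. Because the positive cone of a locally solid vector lattice is topologically closed, so is the order interval $[-a,a]$; passing to the limit in $\beta$ yields $T_\alpha x - Tx\in [-a,a]$, i.e.\ $(T_\alpha - T)(U)\subseteq [-a,a]$. Therefore $T_\alpha\to T$ in the order uniform convergence topology on $U$, and Proposition~\ref{4} then gives $T\in B_{no}(E)$.

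The crux of the argument is the first bridge: converting the order-theoretic Cauchy condition into a topological one by exploiting solidness of the locally solid topology of $E$ to produce order intervals $[-a,a]$ inside arbitrary solid zero neighborhoods. Everything after that, including the limit transfer via closedness of order intervals, is a straightforward consequence of the definitions and of the already-established propositions.
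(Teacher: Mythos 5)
Your proof is correct and follows essentially the same route as the paper: convert the order-uniform Cauchy condition into a topological one (the paper does this by noting that order intervals are topologically bounded and hence can be scaled into any zero neighborhood, you by noting that any solid zero neighborhood contains a nontrivial order interval), define $T$ pointwise via topological completeness of $E$, and invoke Proposition~\ref{4}. Your write-up is in fact more complete than the paper's, which omits the verification that $T_{\alpha}\to T$ in the order uniform convergence topology; your limit passage in $\beta$ using the topological closedness of $[-a,a]$ (valid because the positive cone of a Hausdorff locally solid vector lattice is closed) supplies exactly that missing step.
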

\begin{proof}
Suppose $E$ is complete and $(T_{\alpha})$ is a Cauchy net in $B_{no}(E)$. Let $W$ be an arbitrary zero neighborhood in $E$. For any $a\in E_{+}$ there is a positive scalar $\gamma$ such that $[\frac{-a}{\gamma},\frac{a}{\gamma}]\subseteq W$. There exist an $\alpha_0$ and some zero neighborhood $U\subseteq E$ with $(T_{\alpha}-T_{\beta})(U)\subseteq [\frac{-a}{\gamma},\frac{a}{\gamma}]$ for each $\alpha\ge\alpha_0$ and for each $\beta\ge\alpha_0$. For any $x\in E$, there is a positive real $\eta$ such that $x\in \eta U$ so that we conclude that $(T_{\alpha}(x))$ is a Cauchy net in $E$. Put $T(x)=\lim T_{\alpha}(x)$.
Since this convergence happens in $B_{no}(E)$, Proposition \ref{4} leads to the desired result.
\end{proof}
\begin{proposition}
Let $E$ be a topologically complete vector lattice. Then, $B_{bo}(E)$ is complete with respect to the topology of order uniform convergence on bounded sets.
\end{proposition}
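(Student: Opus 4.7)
The plan is to mirror the proof of the previous proposition for $B_{no}(E)$, replacing the single zero neighborhood $U$ with a generic bounded set $B$. Let $(T_\alpha)$ be a Cauchy net in $B_{bo}(E)$. The first step is to show that for each fixed $x\in E$, the net $(T_\alpha(x))$ is topologically Cauchy in $E$. Given an arbitrary zero neighborhood $W\subseteq E$, I would fix any $a\in E_+$; since order intervals are topologically bounded in a locally solid vector lattice (Theorem 2.19 of \cite{AB}), there is a positive scalar $\gamma$ with $[-a/\gamma, a/\gamma]\subseteq W$. The singleton $\{x\}$ is bounded, so the Cauchy condition applied to this bounded set and to the positive element $a/\gamma$ yields an index $\alpha_0$ with $(T_\alpha - T_\beta)(x)\in[-a/\gamma, a/\gamma]\subseteq W$ for all $\alpha,\beta\ge\alpha_0$.

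Topological completeness of $E$ then allows me to define $T(x)=\lim_\alpha T_\alpha(x)$. Linearity of $T$ is automatic from continuity of addition and scalar multiplication in $E$ and linearity of each $T_\alpha$. The third step is to upgrade the pointwise convergence to order uniform convergence on bounded sets. Fix a bounded $B\subseteq E$ and $a\in E_+$. The Cauchy hypothesis produces $\alpha_0$ with $(T_\alpha-T_\beta)(B)\subseteq[-a,a]$ whenever $\alpha,\beta\ge\alpha_0$. Holding $\alpha\ge\alpha_0$ and $x\in B$ fixed while letting $\beta$ vary, the net $T_\alpha(x)-T_\beta(x)$ lies in $[-a,a]$, and by the first step it converges topologically to $T_\alpha(x)-T(x)$. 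Since order intervals in a (Hausdorff) locally solid vector lattice are $\tau$-closed, the limit lies in $[-a,a]$ as well, giving $(T_\alpha-T)(B)\subseteq[-a,a]$ for every $\alpha\ge\alpha_0$. Finally, Proposition \ref{5} asserts that such a $T$ is automatically $bo$-bounded, so $T\in B_{bo}(E)$ and $T_\alpha\to T$ in the required topology.

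The two places where care is needed are the transition between order and topological structure in step one — which rests on order intervals being topologically bounded, allowing us to convert an arbitrary zero neighborhood $W$ into a shrinkable order interval — and the limit passage in step three, which rests on order intervals being closed. Both are standard features of Hausdorff locally solid vector lattices, and neither step requires any ingredient beyond those already used in the proof for $B_{no}(E)$.
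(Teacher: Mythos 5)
Your proposal is correct and follows essentially the same route as the paper: show $(T_\alpha(x))$ is topologically Cauchy via boundedness of singletons and of order intervals, define $T$ pointwise by topological completeness, and invoke Proposition \ref{5}. In fact your step three — using the $\tau$-closedness of order intervals in a Hausdorff locally solid vector lattice to pass from pointwise convergence to order uniform convergence of $T_\alpha$ to $T$ on bounded sets — supplies a detail the paper leaves implicit, and that detail is genuinely needed before Proposition \ref{5} (whose hypothesis is uniform order convergence on bounded sets, not mere pointwise convergence) can be applied.
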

\begin{proof}
Suppose $E$ is complete and $(T_{\alpha})$ is a Cauchy net in $B_{bo}(E)$. Let $W$ be an arbitrary zero neighborhood in $E$. For any positive $a\in E$ there is a positive scalar $\gamma$ such that $[\frac{-a}{\gamma},\frac{a}{\gamma}]\subseteq W$. Fix a  bounded set $B\subseteq E$. There exists an index $\alpha_0$ with $(T_{\alpha}-T_{\beta})(B)\subseteq [\frac{-a}{\gamma},\frac{a}{\gamma}]$ for each $\alpha\ge\alpha_0$ and for each $\beta\ge\alpha_0$. Since singletons are bounded, for any $x\in E$ and sufficiently large $\alpha$ and $\beta$, we conclude that $(T_{\alpha}(x))$ is a Cauchy net in $E$. Put $T(x)=\lim T_{\alpha}(x)$. By Proposition \ref{5}, $T$ is also $bo$-bounded.
\end{proof}
Using the above results and Theorem 2.17 in \cite{AB}, we have the following.
\begin{corollary}
Suppose $E$ is a Dedekind complete topologically complete locally solid vector lattice. Then, both $B_{no}(E)$ and $B_{bo}(E)$, establish locally solid vector lattices and complete topological algebras at the same time.
\end{corollary}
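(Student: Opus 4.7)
The plan is to assemble the pieces already proved in this section. First, I would verify that $B_{no}(E)$ and $B_{bo}(E)$ are vector sublattices of $B_b(E)$. Since $E$ is Dedekind complete, $B_b(E)$ is itself a Dedekind complete vector lattice whose lattice operations are given by the Riesz--Kantorovich formula. To see that $T\vee S$ remains in $B_{no}(E)$ (respectively $B_{bo}(E)$) whenever $T,S$ do, one picks a common solid zero neighborhood $U$ (respectively replaces the given bounded set by its solid hull, which is still bounded in a locally solid space) and uses the formula together with solidness to bound $T(u)+S(v)$ for $0\le u,v\le x\in U$ in terms of the already order bounded images $T(U)$ and $S(U)$.

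Next, the topological algebra and completeness parts are immediate: the two theorems on continuity of the algebraic operations give joint continuity of addition, scalar multiplication and product for each of the two topologies, and the two propositions just proved supply topological completeness of the algebras. Thus $B_{no}(E)$ and $B_{bo}(E)$ are complete topological algebras under the respective topology.

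For the locally solid vector lattice assertion I would invoke Theorem 2.17 of \cite{AB}, which characterizes locally solid linear topologies on a Riesz space through the (uniform) continuity of the lattice operations together with the linear-topology structure. Addition and scalar multiplication are continuous by the topological algebra step, while the two theorems on continuity of lattice operations on $B_{no}(E)$ and $B_{bo}(E)$ provide exactly the continuity of $\vee$ and $\wedge$ required. With all hypotheses met, each topology is locally solid, so that combined with the sublattice fact above one obtains the full locally solid vector lattice structure.

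The step I expect to demand the most care is, rather than any single calculation, the transition from a pointwise ``order-uniform convergence'' notion to an honest linear topology with a base of solid zero neighborhoods, so that the framework of Theorem 2.17 of \cite{AB} actually applies. Concretely, one must check that the basic zero-neighborhoods, of the form $\{T : T(U)\subseteq[-a,a]\}$ (respectively $\{T : T(B)\subseteq[-a,a]\}$) indexed by $a\in E_+$, are absorbing, balanced and solid in the operator algebra; solidness passes from $E$ because $[-a,a]$ is solid in $E$ and the solid hull operation commutes with the pointwise action on $U$ or $B$. Once this is in hand, the corollary is just the conjunction of the facts gathered above.
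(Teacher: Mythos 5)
Your proposal is correct and follows essentially the same route as the paper, which likewise obtains the corollary by combining the continuity theorems for the algebraic and lattice operations, the two completeness propositions, and Theorem 2.17 of \cite{AB}. The extra details you flag (sublattice closure under $\vee$ via the Riesz--Kantorovich formula, and checking that the basic zero neighborhoods $\{T : T(U)\subseteq[-a,a]\}$ are absorbing, balanced and solid) are exactly the points the paper leaves implicit, so you are filling in the same argument rather than replacing it.
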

 \bibliographystyle{amsplain}
 
\end{document}